\newcommand\Z{{\mathbb Z}}
\newcommand\C{{\mathbb C}}
\newcommand\Ps{{\mathbb P}}
\newcommand\Ec{{\mathcal E}}
\newcommand\F{{\mathcal F}}
\newcommand\Ic{{\mathcal I}}
\newcommand\Lca{{\mathcal L}}
\newcommand\Nc{{\mathcal N}}
\newcommand\Oc{{\mathcal O}}
\newcommand\Pc{{\mathcal P}}
\newcommand\Sc{{\mathcal S}}
\newcommand\codi{\operatorname{codim}}
\newcommand\Supp{\operatorname{Supp}}
\newcommand\iso{\kern.35em{\raise3pt\hbox
{$\sim$}\kern-1.1em\to}\kern.3em}
\newcommand\WIT{\operatorname{WIT}}
\newcommand\IT{\operatorname{IT}}
\theoremstyle{plain}
\newtheorem{thm}{Theorem}[section]
\newtheorem{corol}[thm]{Corollary}
\newtheorem{lemma}[thm]{Lemma}
\newtheorem{prop}[thm]{Proposition}
\theoremstyle{definition}
\newtheorem{defin}[thm]{Definition}
\theoremstyle{remark}
\newtheorem{remark}[thm]{Remark}
\numberwithin{equation}{section}
\begin{document}

\title[Trisecant Identity through the Fourier-Mukai transform]{A proof of the Trisecant Identity through the Fourier-Mukai transform.}

\author[D. Hern\'andez Serrano, J. M. Mu\~noz Porras and F. J. Plaza Mart\'{\i}n]{D. Hern\'andez Serrano \\ J. M. Mu\~noz Porras \\  F. J. Plaza Mart\'{\i}n}

\address{Departamento de Matem\'aticas, Universidad de
Salamanca,  Plaza
        de la Merced 1-4
        \\
        37008 Salamanca. Spain.
        \\
         Tel: +34 923294460. Fax: +34 923294583}
\address{IUFFYM. Instituto Universitario de F\'{\i}sica Fundamental y Matem\'aticas, Universidad de Salamanca, Plaza de la Merced s/n\\ 37008 Salamanca. Spain.}
\date\today
\thanks{
     {\it 2000 Mathematics Subject Classification}: 14K05 (Primary)
    14H40,  14H42, (Secondary). \\
\indent {\it Key words}: Abelian Varieties, Fourier-Mukai, Jacobians, Schottky problem   \\
\indent This work is partially supported by research contracts
MTM2006-0768 of DGI and  SA112A07 of JCyL. The first
author is also supported by MTM2006-04779. \\
\indent {\it E-mail addresses}: dani@usal.es, jmp@usal.es,fplaza@usal.es
}

%\email{esteban@usal.es}
\email{dani@usal.es}
\email{jmp@usal.es}
\email{fplaza@usal.es}

\begin{abstract}
Using the technique of the Fourier-Mukai transform we give an explicit set of generators of the ideal defining an algebraic curve as a subscheme of its Jacobian. Essentially, these ideals are generated by the Fay's trisecant identities.
\end{abstract}

\maketitle

%\tableofcontents

\section{Introduction.}

Let $C$ be a smooth algebraic curve of genus $g$ defined over a field $k$ and $J$ its Jacobian. Our goal is to give explicit equations of the subschemes $W^{i}\subset J$ ($i\leq g$) defined by images of the Abel morphisms $S^{i}C \to J$. In particular, for the case $i=1$ we prove that the equations of $C\iso W^{1}$ as a subvariety of $J$ are generated by the trisecant identities proved by Fay \cite{Fay}. Some of the results presented in this paper were previously proved in \cite{KM} but we give a different proof and approach and we also aim to clarify some imprecisions in that paper. 

Finally, we give a reformulation of the trisecant conjecture characterizing Jacobians recently proved by Krichever \cite{K3}. We hope that our methods will be useful for understanding the proof of this conjecture in geometric terms.

Section $2$ offers a brief overview on abelian varieties and the Fourier-Mukai transform for such varieties. Section $3$ is devoted to giving the main result of the paper; using the theory of the Fourier-Mukai transform we explicitly compute global equations for certain subschemes of a principally polarized abelian variety. The importance of these equations becomes apparent in section $4$, where the case of Jacobians of smooth algebraic curves is studied. It turns out that the subschemes mentioned above are translations of certain symmetric products of the curve and that the equations computed generalize Fay's trisecant identity and Gunning's relations. Finally, in section $5$, we reformulate Krichever's result on the classical Riemann-Schottky problem in terms of the theory developed in the previous sections.

\clearpage
%\newpage

\section{Abelian varieties and the Fourier-Mukai transform.}

Everything in this section is extracted from \cite{MumAb,BBHR}.

\subsection{Abelian varieties.}\quad 

Let $X$ be an abelian variety of dimension $g$ over a field $k$ of characteristic $p\neq 2$. We shall use an additive notation and will denote:
\begin{align*}
&\begin{aligned}
m\colon X\times X &\to X \mbox{ (the group law)}\\
(x,y)&\mapsto x+y
\end{aligned}&&\\[.2cm]
&\begin{aligned}
\iota_{X} \colon X&\to X \mbox{ (the inverse morphism)}\\
x&\mapsto -x
\end{aligned}&&\\[.2cm]
&\begin{aligned}
\tau_{x} \colon X &\to X \mbox{ (the translation morphism)}\\
y&\mapsto \tau_{x}(y):=x+y
\end{aligned}&&
\end{align*}
The neutrum element of $X$ will be denoted $0$, and the projections onto the factors of $X\times X$ will be written with $\pi_{1}$ and $\pi_{2}$.

Let $\hat X$ be the \emph{dual abelian variety} of $X$, which represents the degree $0$ Picard functor of $X$. Therefore, there exists a universal line bundle $\Pc$ on $X\times \hat X$, called the \emph{Poincar\'e bundle}. Universality means that given a variety $T$ and a line bundle $\Lca$ on $X\times T$ (such that the first Chern class of its restriction to the fibers of $\pi_{T}\colon X\times T \to T$ vanishes), there exists a unique morphism $\varphi \colon T \to \hat X$ such that:
$$\Lca \iso (1\times \varphi)^{\ast} \Pc \otimes \pi_{T}^{\ast}\Nc\,,$$
where $\Nc$ is a line bundle on $T$. Thus, if a point $\xi \in \hat X$ corresponds to a line bundle $\Lca$ on $X$, one has:
$$\Pc_{\xi}=\Pc_{|X\times \{\xi\}}\iso \Lca\,.$$
Analogously, we shall denote $\Pc_{x}=\Pc_{|\{x\}\times \hat X}$ if $x\in X$.

Let us normalize the Poincar\'e bundle by the condition that $\Pc_{0}=\Pc_{|\{0\}\times \hat X}$ is the trivial line bundle on $\hat X$.

Let us consider the line bundle $m^{\ast}\Lca \otimes \pi_{1}^{\ast}\Lca^{-1}$ on $X\times X$. By the universal property of the Poincar\'e bundle and its normalization, there exists a unique morphism:
$$\varphi_{\Lca}\colon X \to \hat X$$
such that:
$$(1\times \varphi_{\Lca})^{\ast} \Pc \iso m^{\ast}\Lca \otimes \pi_{1}^{\ast}\Lca^{-1} \otimes \pi_{2}^{\ast}\Lca^{-1}\,,$$
and one checks that $\varphi_{\Lca}(x)=\tau_{x}^{\ast}\Lca \otimes \Lca^{-1}$.

\subsection{The Fourier-Mukai transform for abelian varieties.}\quad 

Let us denote by $D^{b}(X)$ (resp. $D^{b}(\hat X)$) the derived category of bounded coherent complexes on $X$ (resp. on $\hat X$). We shall denote the natural projections by $\pi_{X}\colon X\times \hat X \to X$ and $\pi_{\hat X}\colon X\times \hat X \to \hat X$.

\begin{thm}\cite[Theorem 2.2]{Muk1}
The integral functor defined by $\Pc$:
\begin{align*}
\Sc \colon D^{b}(X) &\to D^{b}(\hat X)\\
\Ec^{\bullet}& \mapsto \Sc(\Ec^{\bullet}):={\bf R} \pi_{\hat X \ast}(\pi_{X}^{\ast}(\Ec^{\bullet})\otimes \Pc)
\end{align*}
is a Fourier-Mukai transform; that is, an equivalence of categories. 
\end{thm}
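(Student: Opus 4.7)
Following Mukai's original strategy, the plan is to exhibit an explicit quasi-inverse and verify the equivalence through a single kernel computation on $X \times X$.

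First, I would define the symmetric integral transform
$$\hat\Sc \colon D^b(\hat X) \to D^b(X), \qquad \hat\Sc(\F^\bullet) \,:=\, \mathbf{R}\pi_{X,*}\bigl(\pi_{\hat X}^*\F^\bullet \otimes \Pc\bigr),$$
using the canonical double-duality identification $X \iso \hat{\hat X}$ under which the Poincar\'e bundle of $\hat X$ corresponds to $\Pc$. With $g = \di X$, it suffices to establish natural isomorphisms
$$\hat\Sc \circ \Sc \,\iso\, \iota_X^{\ast}[-g], \qquad \Sc \circ \hat\Sc \,\iso\, \iota_{\hat X}^{\ast}[-g],$$
since both right-hand sides are evidently autoequivalences.

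The composition $\hat\Sc \circ \Sc$ is again an integral functor. By flat base change and the projection formula on the triple product $X \times \hat X \times X$ (with projections $p_{ij}$), its kernel equals
$$K \,:=\, \mathbf{R}p_{13,*}\bigl(p_{12}^*\Pc \otimes p_{23}^*\Pc\bigr) \,\in\, D^b(X \times X).$$
The theorem then reduces to the identification
$$K \,\iso\, (1_X \times \iota_X)_*\,\Oc_X[-g],$$
that is, $K$ is the structure sheaf of the graph $\Gamma_{\iota_X}$ of the inverse morphism, shifted into cohomological degree $g$.

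To compute $K$ pointwise I would invoke the biextension (bi-additivity) property of the Poincar\'e bundle, which yields the key relation $\Pc_{x_1} \otimes \Pc_{x_2} \iso \Pc_{x_1+x_2}$ on $\hat X$. Hence the derived fibre of $K$ at $(x_1, x_2) \in X \times X$ is $\mathbf{R}\Gamma(\hat X, \Pc_{x_1+x_2})$. The essential cohomological input is Mumford's vanishing: for any $\xi \neq 0$ in $\hat X$ one has $\Hcoh^i(\hat X, \Pc_\xi) = 0$ for every $i$, whereas $\mathbf{R}\Gamma(\hat X, \Oc_{\hat X}) \iso \bigwedge^{\bullet} \Hcoh^1(\hat X, \Oc_{\hat X})$ lives precisely in degrees $0, \dots, g$. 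Semicontinuity and cohomology-and-base-change then confine the support of $K$ to the antidiagonal $\{x_1 + x_2 = 0\} = \Gamma_{\iota_X}$, a smooth subvariety of codimension $g$.

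The principal obstacle is upgrading this fibrewise information to the asserted sheaf-theoretic isomorphism $K \iso \Oc_{\Gamma_{\iota_X}}[-g]$. Reassuringly, the derived fibre of $\Oc_{\Gamma_{\iota_X}}[-g]$ at a point of the graph is the Koszul complex on the conormal space, producing $\bigwedge^{\bullet} k^{g}$ concentrated in degrees $0, \ldots, g$, whose dimensions $\binom{g}{i}$ match precisely those of $\Hcoh^i(\hat X, \Oc_{\hat X})$. The remaining work is to construct a canonical morphism between the two complexes (for instance via a Koszul resolution of $\Gamma_{\iota_X}$ or via Grothendieck--Serre duality) and verify it is an isomorphism on cohomology sheaves. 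Once $K$ is pinned down, the composition $\Sc \circ \hat\Sc$ is handled by the symmetric argument with $X$ and $\hat X$ exchanged, completing the proof that $\Sc$ is an equivalence of categories.
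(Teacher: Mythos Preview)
The paper does not supply its own proof of this statement: it merely records Mukai's theorem with a citation to \cite{Muk1}, as part of the background material in Section~2. There is therefore nothing in the paper to compare your argument against.

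That said, your sketch faithfully reproduces Mukai's original strategy: compute the convolution kernel of $\hat\Sc\circ\Sc$ on $X\times X$, use the biextension property of $\Pc$ together with the cohomology of degree-zero line bundles on $\hat X$ to identify its support with the antidiagonal, and then pin down the kernel as $\Oc_{\Gamma_{\iota_X}}[-g]$. You are right to flag the passage from the fibrewise calculation to the global identification as the one genuinely delicate step; in Mukai's paper this is handled by a direct base-change argument showing that $R^g p_{13,*}(p_{12}^*\Pc\otimes p_{23}^*\Pc)$ is a line bundle supported on the antidiagonal, trivialised by its restriction to $\{0\}\times X$. Your outline is correct and complete enough as a proof plan.
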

%The \emph{dual Fourier-Mukai transform} $\hat \Sc \colon D^{b}(\hat X) \to D^{b}(X)$ is the functor defined by the dual $\Pc^{\ast}$ of $\Pc$, and one has the following isomorphisms:
%$$\hat \Sc \circ \Sc \iso \Ide_{D^b(X)}[-g]\quad \mbox{ and } \quad \Sc \circ \hat \Sc \iso \Ide_{D^b(\hat X)}[-g]\,.$$
%For the sake of simplicity we shall write $\Sc^{i}(\Ec^{\bullet})$  to denote the $i$-th cohomology sheaf of the complex $\Sc(\Ec^{\bullet})$.

%\begin{remark}
%Instead of $\hat \Sc$, Mukai uses in \cite{Muk1} he functor $\tilde \Sc \colon D^{b}(\hat X) \to D^{b}(X)$ defined by $\Pc$. The relation between $\Sc$ and $\tilde \Sc$ is:
%$$\tilde \Sc \circ \Sc \iso \iota_{\hat X}^{\ast}\circ [-g]$$
%\end{remark}

\begin{defin}\cite[Def. 1.6]{BBHR}
A coherent sheaf $\F$ on $X$ is $\WIT_{i}$ if its Fourier-Mukai transform reduces to a single coherent sheaf $\hat \F$ located in degree $i$; that is $\Sc(\F)\iso \hat \F[-i]$. We shall say that $\F$ is $\IT_{i}$ if in addition $\hat \F$ is locally free.
\end{defin}

Let $\Lca$ be an ample line bundle on $X$. From \cite[Prop. 3.19]{BBHR} it follows that $\Lca$ is $\IT_{0}$ and thus:
$$\Sc(\Lca)=\pi_{\hat X \ast}(\pi_{X}^{\ast}\Lca \otimes \Pc)$$
and this is a locally free sheaf on $\hat X$. The following fact is also well-known, but since we shall make extensive use of this kind of computation along this paper we shall not omit its proof. 
\begin{lemma}\label{eq:l:1}
$\varphi_{\Lca}^{\ast}\Sc(\Lca)\iso \Gamma(X,\Lca)\otimes_{k}\Lca^{-1}$.
\end{lemma}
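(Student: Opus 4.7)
The proof reduces to a clean base-change computation combined with the defining property of $\varphi_\Lca$.

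My plan is to consider the fibered square
\[
\begin{CD}
X\times X @>{1\times \varphi_{\Lca}}>> X\times \hat X \\
@V{\pi_2}VV  @VV{\pi_{\hat X}}V \\
X @>>{\varphi_{\Lca}}> \hat X
\end{CD}
\]
and apply base change. Since $\Lca$ is ample it is $\IT_0$, so $\Sc(\Lca)=\pi_{\hat X\ast}(\pi_X^{\ast}\Lca\otimes\Pc)$ is locally free on $\hat X$ and the higher direct images vanish. This is exactly the hypothesis that lets us pull the functor $\varphi_{\Lca}^{\ast}$ inside the pushforward, giving
\[
\varphi_{\Lca}^{\ast}\Sc(\Lca)\;\iso\;\pi_{2\ast}\bigl((1\times\varphi_{\Lca})^{\ast}(\pi_X^{\ast}\Lca\otimes\Pc)\bigr).
\]

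Next I would compute the sheaf inside the pushforward using the two key identities available. Since $\pi_X\circ(1\times\varphi_{\Lca})=\pi_1$, the first factor becomes $\pi_1^{\ast}\Lca$. For the second, the defining property of $\varphi_{\Lca}$ recalled just before the lemma gives $(1\times\varphi_{\Lca})^{\ast}\Pc\iso m^{\ast}\Lca\otimes\pi_1^{\ast}\Lca^{-1}\otimes\pi_2^{\ast}\Lca^{-1}$. Combining, the $\pi_1^{\ast}\Lca$ factors cancel and the projection formula pulls out $\pi_2^{\ast}\Lca^{-1}$, leaving
\[
\varphi_{\Lca}^{\ast}\Sc(\Lca)\;\iso\;\pi_{2\ast}(m^{\ast}\Lca)\otimes\Lca^{-1}.
\]

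Finally I would change coordinates on $X\times X$ via the automorphism $\psi(x,y)=(x+y,y)$, which satisfies $\pi_1\circ\psi=m$ and $\pi_2\circ\psi=\pi_2$. Since $\psi$ is an isomorphism this yields $\pi_{2\ast}(m^{\ast}\Lca)\iso \pi_{2\ast}(\pi_1^{\ast}\Lca)$, and the latter is $\Gamma(X,\Lca)\otimes_k\Oc_X$ by flat base change along $X\to\Spec k$ (equivalently, by the K\"unneth formula, using that $\Lca$ is $\IT_0$ so only $\Hcoh^0$ contributes). Tensoring with $\Lca^{-1}$ gives the claimed isomorphism.

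I expect no serious obstacles: the only subtlety is justifying the initial base change, but this is standard once one knows $\Lca$ is $\IT_0$ (so $\Sc(\Lca)$ commutes with arbitrary base change). The rest is a bookkeeping exercise in the projection formula and the translation trick on $X\times X$.
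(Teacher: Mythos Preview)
Your proposal is correct and follows essentially the same route as the paper: base change along the fibered square, the defining identity for $(1\times\varphi_{\Lca})^{\ast}\Pc$, the shear automorphism $(m,\pi_2)$ of $X\times X$, and flat base change to identify $\pi_{2\ast}\pi_1^{\ast}\Lca$ with $\Gamma(X,\Lca)\otimes_k\Oc_X$. The only cosmetic difference is that the paper applies the projection formula after the coordinate change rather than before.
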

\begin{proof}
\begin{align*}
\varphi_{\Lca}^{\ast}\Sc(\Lca)&\iso \varphi_{\Lca}^{\ast}\pi_{\hat X \ast}(\pi_{X}^{\ast}\Lca \otimes \Pc)\iso \pi_{2 \ast}\big((1\times \varphi_{\Lca})^{\ast}(\pi_{X}^{\ast}\Lca \otimes \Pc)\big)\iso \\
&\iso \pi_{2 \ast}(\pi_{1}^{\ast}\Lca \otimes (1\times \varphi_{\Lca})^{\ast}\Pc)\iso \pi_{2 \ast}(m^{\ast}\Lca \otimes \pi_{2}^{\ast}\Lca^{-1})\iso \\
&\iso \pi_{2 \ast}\big((m,\pi_{2})^{\ast}(\pi_{1}^{\ast}\Lca \otimes \pi_{2}^{\ast}\Lca^{-1})\big)\iso \pi_{2 \ast}(\pi_{1}^{\ast}\Lca \otimes \pi_{2}^{\ast}\Lca^{-1})\iso \\
&\iso \pi_{2 \ast}\pi_{1}^{\ast}\Lca \otimes \Lca^{-1}\iso \Gamma(X,\Lca)\otimes_{k}\Lca^{-1}\,.
\end{align*}
\end{proof}

Let us now recollect some definitions and facts that will be needed later on. 

\begin{defin}\label{eq:d:*}
The \emph{Pontrjagin product} of two coherent sheaves $\Ec$ and $\F$ on $X$ is the sheaf:
$$\Ec  \star \F := m_{\ast}(\pi_{1}\Ec \otimes \pi_{2}\F)\,.$$
This product has a derived functor:
\begin{align*}
\overset{\bf R}{\star} \colon D^{b}(X)\times D^{b}(X) &\to D^{b}(X)\\
(\Ec^{\bullet},\F^{\bullet}) & \mapsto \Ec^{\bullet}\overset{\bf R}{\star} \F^{\bullet}:={\bf R}m_{\ast}(\pi_{1}^{\ast} \Ec^{\bullet} \overset{\bf L}{\otimes} \pi_{2}^{\ast}\F^{\bullet})
\end{align*}
\end{defin}
\begin{prop}\label{eq:p:*}\cite{Muk1}\cite[Prop.3.13]{BBHR}
The Fourier-Mukai transform intertwines the Pontrjagin and the tensor product; that is:
$$\Sc(\Ec^{\bullet}\overset{\bf R}{\star} \F^{\bullet})\iso \Sc(\Ec^{\bullet})\overset{\bf L}{\otimes} \Sc(\F^{\bullet})\quad \quad \Sc(\Ec^{\bullet} \overset{\bf L}{\otimes} \F^{\bullet})\iso \Sc(\Ec^{\bullet}) \overset{\bf R}{\star} \Sc(\F^{\bullet})[g]\,.$$
\end{prop}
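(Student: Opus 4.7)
The strategy is to prove the first identity directly by combining flat base change, the projection formula, and a \emph{biextension} property of $\Pc$ on the threefold product $Z := X\times X\times\hat X$; the second identity then follows by the same method (or, alternatively, by applying $\Sc$ to the first and invoking Mukai's inversion theorem $\Sc\circ\Sc \iso \iota_X^{\ast}[-g]$, which accounts for the shift by $g$).

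The crucial ingredient is the isomorphism
$$(m\times 1_{\hat X})^{\ast}\Pc \;\iso\; p_{13}^{\ast}\Pc \otimes p_{23}^{\ast}\Pc$$
of line bundles on $Z$, where $p_i$, $p_{ij}$ are the obvious projections. I would obtain this from the universal property of $\Pc$: for every $\xi\in\hat X$ representing $\Lca\in\Pic^0(X)$, both sides restrict over $X\times X\times\{\xi\}$ to the same line bundle, because $\varphi_{\Lca}=0$ yields $m^{\ast}\Lca\iso\pi_1^{\ast}\Lca\otimes\pi_2^{\ast}\Lca$ via the defining equation of $\varphi_{\Lca}$ recalled above; combined with the normalization $\Pc_0 \iso \Oc_{\hat X}$ (which makes both sides trivial on $\{0\}\times\{0\}\times\hat X$), the see-saw principle upgrades the fiberwise match to a global isomorphism on $Z$.

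Granting this, consider the Cartesian square obtained by pulling back $\pi_X\colon X\times\hat X\to X$ along $m\colon X\times X\to X$, whose fiber product is precisely $Z$. Flat base change together with the projection formula rewrite
$$\Sc(\Ec^{\bullet}\overset{\bf R}{\star}\F^{\bullet}) \;=\; {\bf R}\pi_{\hat X\ast}\bigl(\pi_X^{\ast}{\bf R}m_{\ast}(\pi_1^{\ast}\Ec^{\bullet}\overset{\bf L}{\otimes}\pi_2^{\ast}\F^{\bullet})\overset{\bf L}{\otimes}\Pc\bigr)$$
as ${\bf R}p_{3\ast}\bigl(p_1^{\ast}\Ec^{\bullet}\overset{\bf L}{\otimes} p_2^{\ast}\F^{\bullet}\overset{\bf L}{\otimes}(m\times 1_{\hat X})^{\ast}\Pc\bigr)$, using $\pi_{\hat X}\circ(m\times 1_{\hat X})=p_3$. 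On the other hand, viewing $Z$ as the fiber product $(X\times\hat X)\times_{\hat X}(X\times\hat X)$ via $\pi_{\hat X}$ on both factors, with projections $p_{13}$ and $p_{23}$, another round of flat base change and the projection formula analogously gives
$$\Sc(\Ec^{\bullet})\overset{\bf L}{\otimes}\Sc(\F^{\bullet}) \;\iso\; {\bf R}p_{3\ast}\bigl(p_1^{\ast}\Ec^{\bullet}\overset{\bf L}{\otimes} p_2^{\ast}\F^{\bullet}\overset{\bf L}{\otimes} p_{13}^{\ast}\Pc\overset{\bf L}{\otimes} p_{23}^{\ast}\Pc\bigr).$$
The biextension identity then matches the two expressions term by term.

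The main obstacle is thus the biextension property of $\Pc$; once it is in hand, everything else reduces to standard derived-category manipulations on abelian varieties. For the second identity, the same machine can be run verbatim with the roles of $\star$ and $\otimes$ interchanged and the roles of $X$, $\hat X$ swapped, and the shift $[g]$ arises from the degree cohomological shift in Mukai's inversion theorem used to identify $\hat\Sc\circ\Sc$ on $D^b(X)$ with $\iota_X^{\ast}[-g]$.
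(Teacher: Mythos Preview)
The paper does not give its own proof of this proposition: it is stated with citations to \cite{Muk1} and \cite[Prop.~3.13]{BBHR} and no argument is supplied. Your proposal reproduces precisely the standard proof found in those references---the biextension identity $(m\times 1_{\hat X})^{\ast}\Pc \iso p_{13}^{\ast}\Pc \otimes p_{23}^{\ast}\Pc$ combined with flat base change and the projection formula on the triple product---so there is nothing to compare; your argument is correct and is exactly what the paper is pointing to.

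One minor comment: your justification of the biextension identity via see-saw is fine, but note that the fiberwise check you describe is really the theorem of the cube (since $\Lca\in\Pic^0(X)$ forces $m^{\ast}\Lca\iso\pi_1^{\ast}\Lca\otimes\pi_2^{\ast}\Lca$); the normalization along $\{0\}\times\{0\}\times\hat X$ is then enough to pin down the global isomorphism. For the second identity, deducing it from the first via Mukai inversion is clean but requires also using that $\Sc$ intertwines $\iota_X^{\ast}$ with $\iota_{\hat X}^{\ast}$ and that $\iota_{\hat X}^{\ast}$ commutes with both $\star$ and $\otimes$; alternatively, the direct argument on $X\times\hat X\times\hat X$ (dual to the one you wrote) avoids this bookkeeping.
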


\begin{defin}\label{eq:d:reg}
A sheaf $\Ec$ on $X$ will be called \emph{Mukai regular}, or simply \emph{M-regular}, if:
$$\codi \big(\Supp R^{i}\Sc(\Ec)\big)>i\, \quad \forall i>0\,.$$ 
Sheaves satisfying the $\IT_{0}$ condition are trivially M-regular.
\end{defin}

\begin{thm}\cite[Thm.2.4]{PP1}
Let $\Ec$ be a coherent sheaf and $\Lca$ an invertible sheaf supported on a subvariety $Y$ of the abelian variety $X$ (possibly $X$ itself). If both $\Ec$ and $\Lca$ are M-regular as sheaves on $X$, then $\Ec \otimes \Lca$ is generated by its global sections. 
\end{thm}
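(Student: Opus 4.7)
The plan is to prove the theorem in two steps, following the general strategy of Pareschi--Popa.

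\smallskip

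\textbf{Step 1: M-regularity implies continuous global generation.} First I would show that an M-regular sheaf $\F$ satisfies the following stronger property (\emph{continuous global generation}, CGG): for every $x \in X$ there is a nonempty Zariski open $U_x \subset \hat X$ such that for every $\xi \in U_x$ the evaluation map
\[
H^0(X, \F \otimes \Pc_\xi) \longrightarrow (\F \otimes \Pc_\xi)_x \big/ \mathfrak m_x (\F \otimes \Pc_\xi)_x
\]
is surjective. The main tool is cohomology and base change applied to $R\Sc(\F)$: by the M-regularity hypothesis, the supports of the higher transforms $R^i\Sc(\F)$ have codimension strictly greater than $i$, so away from a proper closed subset $Z \subset \hat X$ the complex $R\Sc(\F)$ reduces to a locally free sheaf concentrated in degree zero, whose fiber at $\xi \notin Z$ is $H^0(X, \F \otimes \Pc_\xi)$. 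One then uses Fourier-Mukai inversion, so that $\F$ can be reconstructed from $R\Sc(\F)$, to translate this cohomological control into surjectivity of the evaluation onto stalks of $\F$, for $\xi$ running over a suitable open set depending on $x$.

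\smallskip

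\textbf{Step 2: Multiplication of sections via complementary Poincar\'e twists.} Once CGG is in hand for both $\Ec$ and $\Lca$, fix $x \in X$ and let $U_x, V_x \subset \hat X$ be the open sets provided by CGG of $\Ec$ and of $\Lca$ at $x$. Since the inversion $\eta \mapsto -\eta$ is an automorphism of $\hat X$, the set $-V_x$ is open, and $U_x \cap (-V_x)$ is a nonempty open. Pick any $\xi$ in this intersection. Then both evaluations
\[
H^0(X, \Ec \otimes \Pc_\xi) \to (\Ec \otimes \Pc_\xi)_x/\mathfrak m_x, \qquad H^0(X, \Lca \otimes \Pc_{-\xi}) \to (\Lca \otimes \Pc_{-\xi})_x/\mathfrak m_x
\]
are surjective. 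The relation $\Pc_\xi \otimes \Pc_{-\xi} \iso \Oc_X$ provides a multiplication of sections
\[
H^0(X, \Ec \otimes \Pc_\xi) \otimes H^0(X, \Lca \otimes \Pc_{-\xi}) \longrightarrow H^0(X, \Ec \otimes \Lca),
\]
and tensoring the two pointwise surjections shows that the image generates the stalk $(\Ec \otimes \Lca)_x$ modulo $\mathfrak m_x$. By Nakayama, $\Ec \otimes \Lca$ is globally generated at $x$, and since $x$ was arbitrary, we conclude.

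\smallskip

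\textbf{Main obstacle.} The genuine difficulty is Step 1: turning the codimensional condition of M-regularity into an honest generation statement by sections of twists $\F \otimes \Pc_\xi$. Step 2 is then a clean argument exploiting the group structure through $\Pc_\xi \otimes \Pc_{-\xi} \iso \Oc_X$. A minor point, not a real obstacle, is that $\Lca$ is actually pushed forward from a proper subvariety $Y \subset X$; the projection formula identifies global sections of the pushforward with global sections on $Y$, so the entire argument transports to $Y$ without further change.
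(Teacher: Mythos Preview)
The paper does not prove this theorem: it is quoted verbatim from Pareschi--Popa \cite[Thm.~2.4]{PP1} and used as a black box (only its corollary, Corollary~\ref{eq:c:PP1}, is invoked in the proof of Theorem~\ref{eq:t:gg}). So there is no proof in the paper to compare against.

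That said, your outline is essentially the original Pareschi--Popa strategy. Step~2 is correct and is exactly their argument: continuous global generation of both factors, combined with $\Pc_\xi\otimes\Pc_{-\xi}\simeq\Oc_X$, yields global generation of the tensor product. Your handling of the case where $\Lca$ is supported on a proper subvariety via the projection formula is also fine.

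The only caveat is in Step~1. Your formulation of continuous global generation (for each $x$ there is an open $U_x$ such that evaluation at $x$ is surjective for \emph{every} $\xi\in U_x$) is slightly stronger than what Pareschi--Popa actually prove and use; their version says that for \emph{any} nonempty open $U\subset\hat X$ the combined map $\bigoplus_{\xi\in U}H^0(\F\otimes\Pc_\xi)\otimes\Pc_\xi^{-1}\to\F$ is surjective. Your Step~2 only needs the existence of one good $\xi$, so the weaker version suffices. More importantly, your sketch of why M-regularity implies CGG (``Fourier--Mukai inversion \dots\ translates cohomological control into surjectivity onto stalks'') is too vague to stand as a proof: the actual argument requires a nontrivial duality/integration step relating the cokernel of the evaluation map to the higher transforms $R^i\Sc(\F)$ and their codimension, and this is where the M-regularity hypothesis is genuinely consumed. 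You correctly flag this as the main obstacle, but as written Step~1 is a gap rather than a proof.
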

\begin{corol}\label{eq:c:PP1}\cite{PP1}
Let $(X,\Theta)$ be a polarized abelian variety and $\Ec$ a coherent sheaf on $(X,\Theta)$. If $\Ec(-\Theta)$ is M-regular, then $\Ec$ is generated by its global sections.
\end{corol}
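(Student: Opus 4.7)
The plan is to deduce this directly from the preceding Theorem 2.5 by choosing the right pair of sheaves. Apply that theorem with its first argument replaced by $\Ec(-\Theta)$ and its invertible sheaf taken to be $\Lca := \Oc_{X}(\Theta)$, supported on $Y = X$ itself, which is one of the cases explicitly allowed by the statement of the theorem. Under this choice the conclusion $\Ec(-\Theta)\otimes\Oc_{X}(\Theta) \iso \Ec$ is generated by its global sections, which is exactly what we want.

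To justify the application I need to check the two M-regularity hypotheses of Theorem 2.5. The first, that $\Ec(-\Theta)$ is M-regular, is the hypothesis of the corollary itself. The second, that $\Oc_{X}(\Theta)$ is M-regular, is the step that actually uses the polarization: because $\Theta$ is a principal (or just ample) polarization, $\Oc_{X}(\Theta)$ is an ample line bundle on $X$, so by the remark following the definition of $\IT_{i}$ (citing \cite[Prop.~3.19]{BBHR}) it satisfies the $\IT_{0}$ condition. By the last sentence of Definition~\ref{eq:d:reg}, every $\IT_{0}$ sheaf is trivially M-regular, since then $R^{i}\Sc(\Oc_{X}(\Theta))=0$ for all $i>0$ and the codimension condition is vacuously satisfied.

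With both hypotheses verified, Theorem 2.5 applies and yields the result. There is no real obstacle here; the corollary is essentially a repackaging of the theorem in the presence of a polarization, the only substantive point being the observation that ampleness of $\Oc_{X}(\Theta)$ upgrades to $\IT_{0}$ and hence to M-regularity, so that tensoring a sheaf whose twist by $-\Theta$ is M-regular with $\Oc_{X}(\Theta)$ falls under the hypotheses of Theorem 2.5.
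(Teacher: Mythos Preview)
Your argument is correct and is exactly the intended deduction: apply Theorem~2.5 with the pair $\big(\Ec(-\Theta),\,\Oc_X(\Theta)\big)$, using that $\Oc_X(\Theta)$ is ample hence $\IT_0$ hence M-regular. The paper does not supply its own proof of this corollary (it is simply quoted from \cite{PP1}), but your derivation is the standard one and matches how the result is obtained there.
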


\section{Global equations.}

\subsection{First results.}\quad

Let $H$ be a closed and finite subscheme of $X$ and let $\Lca$ be an ample line bundle on $X$. Consider the following exact sequence of sheaves on $X$:
$$0\to \Ic_{H}\otimes \Lca \to \Lca \to \Lca_{|H} \to 0\,,$$
where $\Ic_{H}$ denotes the sheaf of ideals of $H$. Since $\Lca_{|H}$ is concentrated over a $0$-dimensional variety, using Grauert's theorems and base change one can show that $\Sc^{j}(\Lca_{|H})=0$ for all $j>0$ and moreover:
$$\Sc^{0}(\Lca_{|H})=\pi_{\hat X \ast}\big((\pi_{X}^{\ast}\Lca \otimes \Pc)_{|H\times \hat X}\big)$$
is a locally free sheaf on $\hat X$; that is, $\Lca_{|H}$ is $\IT_{0}$. Therefore, the restriction map $\Lca \to \Lca_{H}$ induces a morphism of locally free sheaves on $\hat X$ between its transforms:
\begin{equation}\label{eq:e:alpha}
\alpha(\Lca,H) \colon \Sc(\Lca) \to \Sc(\Lca_{|H})
\end{equation}
\begin{defin}\label{eq:d:Z}
For every non-negative integer $i$, we shall write $Z^{i}(\Lca,H)$ to denote the closed subscheme of $\hat X$ defined by $\Lambda^{i} \alpha (\Lca,H)=0$.
\end{defin}
\begin{remark}
Notice that since $\Sc(\Lca)$ is not generetad by its global sections, the equations $\Lambda^{i} \alpha (\Lca,H)=0$ are not global. We will give a procedure to explicitly compute the global equations of $Z^{i}(\Lca,H)$.
\end{remark}

Let $\Delta\subset X\times X$ denote the diagonal and let $(H,0)+\Delta\subset X\times X$ be the image of $H\times X$ under the map $(m,\pi_{2})\colon X\times X\to X\times X$. Let $\beta(\Lca,H)$ be the following morphism of sheaves on $X$:
\begin{equation}\label{eq:e:beta}
\beta(\Lca,H)\colon \Gamma(X,\Lca)\otimes_{k}\Oc_{X} \to \pi_{2 \ast}(\pi_{1}^{\ast}\Lca_{|(H,0)+\Delta})\,,
\end{equation}
defined by restriction.
\begin{defin}\label{eq:d:U}
For every non-negative integer $i$, we shall write $U^{i}(\Lca,H)$ to denote the closed subscheme of $X$ defined by $\Lambda^{i} \beta (\Lca,H)=0$.

\end{defin}

\begin{lemma}\label{eq:l:2}One has the following diagram:
$$\xymatrix{
\varphi_{\Lca}^{\ast} \Sc(\Lca)\otimes_{\Oc_{X}}\Lca \ar[rrr]^{\varphi_{\Lca}^{\ast}\big(\alpha(\Lca,H)\big)\otimes 1} \ar[d]^{\wr}& & & \varphi_{\Lca}^{\ast} \Sc(\Lca_{|H})\otimes_{\Oc_{X}}\Lca \ar[d]^{\wr}\\
\Gamma(X,\Lca)\otimes_{k}\Oc_{X} \ar[rrr]^{\beta(\Lca,H)}& & &\pi_{2 \ast}(\pi_{1}^{\ast}\Lca_{|(H,0)+\Delta})
}$$
%$$\beta(\Lca,H)\iso \varphi_{\Lca}^{\ast}\big(\alpha(\Lca,H)\big)\otimes \Lca$$
\end{lemma}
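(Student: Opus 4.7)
The plan is to construct the right vertical isomorphism by the same chain of canonical identifications used in the proof of Lemma \ref{eq:l:1}, and then to verify commutativity of the square by noting that every step in that chain is natural in the input sheaf. Commutativity follows by applying those identifications simultaneously to the restriction morphism $\Lca \to \Lca_{|H}$, which induces $\alpha(\Lca,H)$ on the top row and, after the identifications, $\beta(\Lca,H)$ on the bottom row.

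For the right vertical, since $\Lca_{|H}$ is $\IT_{0}$, the sheaf $\Sc(\Lca_{|H})$ is locally free on $\hat X$, so base change along $\varphi_{\Lca}$ gives
\begin{equation*}
\varphi_{\Lca}^{\ast}\Sc(\Lca_{|H}) \iso \pi_{2\ast}\bigl((1\times\varphi_{\Lca})^{\ast}((\pi_{X}^{\ast}\Lca \otimes \Pc)_{|H\times \hat X})\bigr).
\end{equation*}
Using the defining property $(1\times\varphi_{\Lca})^{\ast}\Pc \iso m^{\ast}\Lca \otimes \pi_{1}^{\ast}\Lca^{-1}\otimes \pi_{2}^{\ast}\Lca^{-1}$ and the projection formula with $\pi_{2}^{\ast}\Lca^{-1}$, after tensoring with $\Lca$ one obtains
\begin{equation*}
\varphi_{\Lca}^{\ast}\Sc(\Lca_{|H})\otimes \Lca \iso \pi_{2\ast}\bigl((m^{\ast}\Lca)_{|H\times X}\bigr).
\end{equation*}
Finally, $(m,\pi_{2})\colon X\times X \to X\times X$ is an automorphism that sends $H\times X$ isomorphically onto $(H,0)+\Delta$ and intertwines $m$ with $\pi_{1}$ (and $\pi_{2}$ with $\pi_{2}$); hence $(m^{\ast}\Lca)_{|H\times X}$ is identified with $\pi_{1}^{\ast}\Lca_{|(H,0)+\Delta}$ and $\pi_{2\ast}$ is preserved. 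This yields the right vertical isomorphism.

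For commutativity, each step in the above chain is a natural transformation in the sheaf being transported: base change, tensoring with the Poincaré bundle, the identification of $(1\times\varphi_{\Lca})^{\ast}\Pc$, the projection formula, and pullback along the automorphism $(m,\pi_{2})$ are all functorial. Applying them to the restriction morphism $\Lca \to \Lca_{|H}$, the top arrow $\varphi_{\Lca}^{\ast}\alpha(\Lca,H)\otimes 1$ is carried to the morphism $\pi_{2\ast}(\pi_{1}^{\ast}\Lca) \to \pi_{2\ast}(\pi_{1}^{\ast}\Lca_{|(H,0)+\Delta})$ obtained by pushing down the restriction to $(H,0)+\Delta$. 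Under the canonical identification $\pi_{2\ast}\pi_{1}^{\ast}\Lca \iso \Gamma(X,\Lca)\otimes_{k}\Oc_{X}$, this is precisely $\beta(\Lca,H)$ by definition.

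The main obstacle is bookkeeping the naturality through the five steps, particularly the base change along $\varphi_{\Lca}$, which is not flat in general. This is not a genuine issue here because both $\Sc(\Lca)$ and $\Sc(\Lca_{|H})$ are locally free on $\hat X$ (the former because $\Lca$ is $\IT_{0}$, the latter because $\Lca_{|H}$ has finite support), so all higher direct images vanish and the base change morphisms are honest isomorphisms functorial in the input sheaf. A secondary subtlety is the compatibility of the isomorphism $(m,\pi_{2})$ with restriction to $(H,0)+\Delta$, but this is tautological once one recognizes $(H,0)+\Delta$ as the scheme-theoretic image of $H\times X$.
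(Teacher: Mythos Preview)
Your proof is correct and takes essentially the same approach as the paper: the paper's proof consists of the single sentence ``This follows from Lemma~\ref{eq:l:1},'' and your argument is precisely the detailed unpacking of that claim, running the chain of canonical identifications from Lemma~\ref{eq:l:1} functorially in the input sheaf and applying it to the restriction morphism $\Lca \to \Lca_{|H}$.
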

\begin{proof}
This follows from Lemma \ref{eq:l:1}.
\end{proof}

\begin{corol}\label{eq:c:3}
The preimage of $Z^{i}(\Lca,H)$ by the morphism $\varphi_{\Lca}$ is precisely $U^{i}(\Lca,H)$.
\end{corol}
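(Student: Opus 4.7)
The plan is to apply Lemma \ref{eq:l:2} directly, invoking three elementary functorial properties of the closed subscheme cut out by the vanishing of $\Lambda^i f$ for a morphism $f$ between locally free sheaves: (i) its formation commutes with pullback by an arbitrary morphism of schemes; (ii) it is unchanged when $f$ is replaced by $f \otimes \mathrm{id}_{\Lca}$ for an invertible sheaf $\Lca$, since the local matrix entries differ only by multiplication by nowhere-vanishing sections; and (iii) it is preserved by isomorphisms of source and target. Each of these is a standard fact about ideals of minors, and they apply here because the paper has already established that $\Sc(\Lca)$, $\Sc(\Lca_{|H})$, and the two sheaves appearing in the bottom row of Lemma \ref{eq:l:2} are all locally free.

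The steps I would carry out, in order, are the following. First, using (i), rewrite $\varphi_{\Lca}^{-1}(Z^{i}(\Lca,H))$ as the closed subscheme of $X$ defined by $\Lambda^{i} \varphi_{\Lca}^{\ast}\alpha(\Lca,H) = 0$. Second, using (ii), observe that the same subscheme is cut out by the vanishing of $\Lambda^{i}\bigl(\varphi_{\Lca}^{\ast}\alpha(\Lca,H) \otimes \mathrm{id}_{\Lca}\bigr)$, which is precisely the $\Lambda^i$ of the top morphism in the diagram of Lemma \ref{eq:l:2}. Third, using (iii) together with the vertical isomorphisms of that diagram, identify this ideal with the ideal cut out by $\Lambda^{i}\beta(\Lca,H) = 0$, which by Definition \ref{eq:d:U} is exactly the ideal of $U^{i}(\Lca,H)$.

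The principal obstacle is really just bookkeeping: one must pin down the interpretation of ``$\Lambda^{i}f = 0$'' as a scheme-theoretic vanishing ideal so that properties (i)--(iii) hold and not merely the corresponding equalities of underlying topological subsets. Once this is granted, the corollary follows mechanically from Lemma \ref{eq:l:2}, with no further geometric input needed. In particular, no properties of abelian varieties beyond the already-used local freeness of the relevant Fourier--Mukai transforms enter into the argument.
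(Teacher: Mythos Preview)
Your proposal is correct and follows the same approach as the paper, which gives no explicit proof of the corollary beyond its placement immediately after Lemma~\ref{eq:l:2}. You have simply spelled out the standard determinantal-ideal facts (pullback-compatibility, invariance under tensoring by a line bundle, and invariance under isomorphisms of source and target) that make the corollary an immediate consequence of the commutative square in that lemma.
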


Let us note that the above approach allows us to write down {\sl local} equations for the subschemes $Z^{i}(\Lca,H)$, whose geometric interpretation will be given in  Section \ref{eq:s:Jac} for the case of Jacobian varieties. On the other hand, {\sl global} equations for $U^{i}(\Lca,H)$ are already at our disposal, and these subschemes are related to the relative position of the points of $H$. Our next task, motivated by Lemma~\ref{eq:l:2}, consists of comparing these subschemes as well as obtaining {\sl global} equations for $Z^{i}(\Lca,H)$.

\subsection{Global equations.}\quad

Henceforth $X$ will denote a  principally polarized abelian variety $(X,\Theta)$, where $\Theta$ is a symmetric polarization and $\Lca$ will be a line bundle algebraically equivalent to $\Oc_{X}(2\Theta)$. Recall that the principal polarization $\Oc_{X}(\Theta)$ defines an isomorphism $\varphi_{\Oc_{X}(\Theta)}\colon X \iso \hat X$. From now on $D^{b}(X)$ and $D^{b}(\hat X)$ will be identified. Under this identification, the Poincar\'e bundle on $X\times X$, which we shall still denote by $\Pc$, takes the shape:
$$\Pc \overset{not}{=}(1\times \varphi_{\Oc_{X}(\Theta)})^{\ast} \Pc \iso m^{\ast}\Oc_{X}(\Theta) \otimes \pi_{1}^{\ast}\Oc_{X}(-\Theta) \otimes \pi_{2}^{\ast}\Oc_{X}(-\Theta)\,.$$
We shall use $Z^{i}(\Lca,H)$ to denote the subscheme of $X$ defined by $\varphi_{\Oc_{X}(\Theta)}^{-1}(Z^{i}(\Lca,H))$. Under the isomorphism $\varphi_{\Oc_{X}(\Theta)}\colon X \iso \hat X$, one has the following commutative diagram:
$$\xymatrix{
X \ar[r]^{\varphi_{\Lca}} \ar[d]_{2_{X}} & \hat X \\
X \ar[ur]_{\varphi_{\Oc_{X}(\Theta)}} &
}$$
where $2_{X}$ denotes the multiplication by $2$. We shall use the same notation both for $\varphi_{\Lca}$ and $2_{X}$.
\begin{thm}\label{eq:t:gg}
The sheaf $\Sc(\Lca)\otimes \Lca$ is generated by its global sections.
\end{thm}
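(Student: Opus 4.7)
My plan is to apply Corollary \ref{eq:c:PP1}: it suffices to verify that $\Sc(\Lca)\otimes\Lca(-\Theta)$ is M-regular. Set $M:=\Lca(-\Theta)$, which is algebraically equivalent to $\Oc_X(\Theta)$ and hence ample and $\IT_0$. I will in fact establish the stronger statement that $\Sc(\Lca)\otimes M$ is $\IT_0$.

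To compute its Fourier-Mukai transform, combine Proposition \ref{eq:p:*} with Mukai's inversion theorem $\Sc\circ\Sc\iso\iota_X^{\ast}[-g]$:
\[
\Sc\bigl(\Sc(\Lca)\otimes M\bigr)\iso \Sc(\Sc(\Lca))\overset{\mathbf R}{\star}\Sc(M)[g]\iso \iota_X^{\ast}\Lca\overset{\mathbf R}{\star}\Sc(M).
\]
Since $M$ is $\IT_0$ with $\chi(M)=1$, the transform $\Sc(M)$ is a line bundle. Applying Lemma \ref{eq:l:1} to $M$ itself, and noting that $\varphi_M=\varphi_{\Oc_X(\Theta)}$ (because $\varphi_L$ depends on $L$ only through its algebraic class), the identification $X\iso\hat X$ promotes that lemma to an isomorphism $\Sc(M)\iso M^{-1}$ of sheaves on $X$. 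Thus
\[
\Sc\bigl(\Sc(\Lca)\otimes M\bigr)\iso \iota_X^{\ast}\Lca\overset{\mathbf R}{\star}M^{-1}\iso {\mathbf R}m_{\ast}\bigl(\pi_1^{\ast}\iota_X^{\ast}\Lca\otimes\pi_2^{\ast}M^{-1}\bigr).
\]

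It then remains to show this derived pushforward is concentrated in degree $0$ and locally free. By cohomology and base change for the smooth morphism $m$, this reduces to checking that on each fiber $m^{-1}(z)\iso X$ the restriction $\iota_X^{\ast}\Lca\otimes\tau_z^{\ast}\iota_X^{\ast}M^{-1}$ of our line bundle has no higher cohomology and $1$-dimensional space of sections. Its algebraic class is $\Lca\otimes M^{-1}\equiv 2\Theta-\Theta=\Theta$, which is ample, so Mumford's vanishing on $X$ gives $H^i=0$ for $i>0$ and $h^0=1$. Hence $\Sc(\Lca)\otimes M$ is $\IT_0$, a fortiori M-regular, and Corollary \ref{eq:c:PP1} yields the global generation of $\Sc(\Lca)\otimes\Lca$. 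The step I expect to require the most care is the identification $\Sc(M)\iso M^{-1}$ through Lemma \ref{eq:l:1}, because it hinges on the observation that $M$ and $\Oc_X(\Theta)$ induce the \emph{same} isogeny $X\to\hat X$; after that point the argument is a routine base-change computation.
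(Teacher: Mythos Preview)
Your argument is correct and follows the same overall strategy as the paper: invoke Corollary~\ref{eq:c:PP1} and show that $\Sc(\Lca)\otimes\Lca(-\Theta)$ is in fact $\IT_0$. The computation, however, is organized differently. The paper first reduces to $\Lca=\Oc_X(2\Theta)$ and identifies $\Sc(\Lca)\otimes\Oc_X(\Theta)$ directly as the Pontrjagin product $\Oc_X(\Theta)\star\Oc_X(\Theta)$ by unwinding the definition of $\Sc$ and using the symmetry of $\Theta$; then Proposition~\ref{eq:p:*} together with $\Sc(\Oc_X(\Theta))\iso\Oc_X(-\Theta)$ gives $\Sc$ of this product as $\Oc_X(-2\Theta)$. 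You instead apply $\Sc$ first, using Proposition~\ref{eq:p:*} and Mukai's inversion formula (not stated in the paper but standard) to reach a Pontrjagin product $\iota_X^{\ast}\Lca\overset{\mathbf R}{\star}M^{-1}$, and then verify $\IT_0$ by cohomology and base change along the fibers of $m$. Your route has the mild advantage of handling an arbitrary $\Lca$ in the algebraic class of $2\Theta$ without the preliminary reduction, at the cost of importing the inversion theorem. One small slip: the restriction of $\pi_2^{\ast}M^{-1}$ to $m^{-1}(z)$ pulls back along $x\mapsto z-x=\tau_z\circ\iota_X$, so you get $\iota_X^{\ast}\tau_z^{\ast}M^{-1}$ rather than $\tau_z^{\ast}\iota_X^{\ast}M^{-1}$; this is harmless since only the algebraic class matters for the vanishing and the computation of $h^0$.
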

\begin{proof}
For the sake of simplicity, one can reduce to the case in which $\Lca=\Oc_{X}(2\Theta)$. By Corollary \ref{eq:c:PP1}, it suffices to prove that: 
$$\Sc(\Lca)\otimes \Lca \otimes \Oc_{X}(-\Theta)\iso \Sc(\Lca)\otimes \Oc_{X}(\Theta)$$
is M-regular (see Definition \ref{eq:d:reg}). Indeed, let us set:
\begin{align*}
\mu \colon X\times X &\to X\\
(x,y)&\mapsto x-y 
\end{align*}
We have:
\begin{align*}
\Sc(\Lca)\otimes \Oc_{X}(\Theta)&=\pi_{2\ast}\big(\pi_{1}^{\ast}\Lca \otimes \Pc \big)\otimes \Oc_{X}(\Theta)\iso \\
&\iso \pi_{2\ast}\big(\pi_{1}^{\ast}\Oc_{X}(\Theta)\otimes m^{\ast}\Oc_{X}(\Theta)\big)\iso \\
&\iso \pi_{2\ast}\big((m,\pi_{1})^{\ast}(\pi_{2}^{\ast}\Oc_{X}(\Theta)\otimes \pi_{1}^{\ast}\Oc_{X}(\Theta)\big)\iso \\
&\iso \mu_{\ast}\big(\pi_{2}^{\ast}\Oc_{X}(\Theta)\otimes \pi_{1}^{\ast}\Oc_{X}(\Theta)\big)\iso \\
&\iso m_{\ast}(1\times \iota_{X})_{\ast}\big(\pi_{2}^{\ast}\Oc_{X}(\Theta)\otimes \pi_{1}^{\ast}\Oc_{X}(\Theta)\big)\iso \\
&\iso m_{\ast}(1\times \iota_{X})_{\ast}\big(\pi_{2}^{\ast}\Oc_{X}(\Theta)\otimes (1\times \iota_{X})^{\ast}\pi_{1}^{\ast}\Oc_{X}(\Theta)\big)\iso \\
&\iso m_{\ast}\big((1\times \iota_{X})_{\ast}\pi_{2}^{\ast}\Oc_{X}(\Theta)\otimes \pi_{1}^{\ast}\Oc_{X}(\Theta)\big)\iso \\
&\iso m_{\ast}\big(\pi_{2}^{\ast}\iota_{X}^{\ast}\Oc_{X}(\Theta)\otimes \pi_{1}^{\ast}\Oc_{X}(\Theta)\big)\iso \\
&\iso \Oc_{X}(\Theta)\star \Oc_{X}(\Theta)\,,
\end{align*}
where the last step uses the fact that $\Oc_{X}(\Theta)$ is symmetric; that is $\iota_{X}^{\ast}\Oc_{X}(\Theta)\iso \Oc_{X}(\Theta)$. Now, using Proposition \ref{eq:p:*} and bearing in mind that $\Sc \big(\Oc_{X}(\Theta)\big)\iso \Oc_{X}(-\Theta)$ one has: 
$$\Sc \big(\Oc_{X}(\Theta)\star \Oc_{X}(\Theta)\big)\iso \Sc\big(\Oc_{X}(\Theta)\big)\otimes \Sc \big(\Oc_{X}(\Theta)\big)\iso \Oc_{X}(-2\Theta)\,.$$
This implies that $\Sc(\Lca)\otimes \Lca \otimes \Oc_{X}(-\Theta)$ is $\IT_{0}$, and is therefore M-regular.
\end{proof}

\begin{corol}\label{eq:c:delta}
Let $H=\{c_{1},\dots ,c_{n+2}\}$ be a set of $n+2$ pairwise different points of $X$. The subscheme $Z^{i}(\Lca,H)$ of Definition \ref{eq:d:Z} coincides with the subscheme defined by the global equations $\Lambda^{i}\delta(\Lca,H)=0$, where:
$$\delta(\Lca,H)\colon \Gamma(X\times X,\pi_{1}^{\ast}\Lca \otimes \Pc \otimes \pi_{2}^{\ast}\Lca) \to \Gamma(X\times X,(\pi_{1}^{\ast}\Lca \otimes \Pc \otimes \pi_{2}^{\ast}\Lca)_{|H\times X})\,.$$

\end{corol}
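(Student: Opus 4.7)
The plan is to promote the morphism $\alpha(\Lca,H)\colon\Sc(\Lca)\to\Sc(\Lca_{|H})$, neither end of which is a priori globally generated, to a morphism whose source is a \emph{trivial} $\Oc_{X}$-module, and to check that this replacement does not change the rank locus $Z^{i}(\Lca,H)$. The key input will be Theorem~\ref{eq:t:gg}, which furnishes the global generation of $\Sc(\Lca)\otimes\Lca$ after the harmless twist by $\Lca$.

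First I would observe that since $\Lca$ is a line bundle, tensoring by $\Lca$ is exact and commutes with $\Lambda^{i}$, so the original definition gives
\[
Z^{i}(\Lca,H)\;=\;\{\,x\in X\,:\,\Lambda^{i}\bigl(\alpha(\Lca,H)\otimes 1_{\Lca}\bigr)_{x}=0\,\}.
\]
Next, by Theorem~\ref{eq:t:gg} the evaluation map $\operatorname{ev}\colon V\otimes_{k}\Oc_{X}\twoheadrightarrow\Sc(\Lca)\otimes\Lca$ is surjective, where $V:=\Gamma(X,\Sc(\Lca)\otimes\Lca)$. Post-composing with $\alpha(\Lca,H)\otimes 1_{\Lca}$ yields a morphism $\tilde\delta\colon V\otimes_{k}\Oc_{X}\to\Sc(\Lca_{|H})\otimes\Lca$. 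Because $\operatorname{ev}$ is pointwise surjective on fibres, the rank of $\tilde\delta_{x}$ equals that of $(\alpha(\Lca,H)\otimes 1_{\Lca})_{x}$ at every $x\in X$; consequently $\Lambda^{i}\tilde\delta=0$ cuts out exactly $Z^{i}(\Lca,H)$. This is the central step and where I expect the only subtlety to reside: one must verify that surjectivity of $\operatorname{ev}$ implies the rank identity (a short diagram-chase on fibres), which in turn legitimises passing from ``local'' to ``global'' equations.

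The remaining task is identification. By the projection formula and the $\IT_{0}$-property of both $\Lca$ and $\Lca_{|H}$ (the latter being supported on a $0$-dimensional scheme, as observed before Definition~\ref{eq:d:Z}), I would rewrite
\begin{align*}
\Sc(\Lca)\otimes\Lca &\iso \pi_{2\ast}\bigl(\pi_{1}^{\ast}\Lca\otimes\Pc\otimes\pi_{2}^{\ast}\Lca\bigr),\\
\Sc(\Lca_{|H})\otimes\Lca &\iso \pi_{2\ast}\bigl((\pi_{1}^{\ast}\Lca\otimes\Pc\otimes\pi_{2}^{\ast}\Lca)_{|H\times X}\bigr),
\end{align*}
so that taking global sections on $X$ (using that higher direct images along $\pi_{2}$ vanish, hence Leray collapses) identifies $V$ with $\Gamma(X\times X,\pi_{1}^{\ast}\Lca\otimes\Pc\otimes\pi_{2}^{\ast}\Lca)$ and the target of $\tilde\delta$ with $\Gamma(X\times X,(\pi_{1}^{\ast}\Lca\otimes\Pc\otimes\pi_{2}^{\ast}\Lca)_{|H\times X})$. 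Under these identifications $\tilde\delta$ is the sheafified restriction map, which on global sections is precisely $\delta(\Lca,H)$. Hence the vanishing of the $i$-th minors of $\tilde\delta$ is the vanishing of $\Lambda^{i}\delta(\Lca,H)$, and the corollary follows.
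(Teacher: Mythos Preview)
Your argument is correct and follows essentially the same route as the paper: twist $\alpha(\Lca,H)$ by $\Lca$, invoke Theorem~\ref{eq:t:gg} to get global generation of $\Sc(\Lca)\otimes\Lca$, and then identify $\Gamma(X,\Sc(\Lca)\otimes\Lca)$ and $\Gamma(X,\Sc(\Lca_{|H})\otimes\Lca)$ with the stated spaces on $X\times X$ via the projection formula. The paper's proof is terser (it simply says ``Since $\Sc(\Lca)\otimes\Lca$ is generated by its global sections, one concludes''), whereas you have spelled out the fibrewise rank argument and the Leray/projection-formula identifications explicitly.
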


\begin{proof} 
By definition, the subscheme $Z^{i}(\Lca,H)$ is defined by the local equations $\Lambda^{i}\alpha(\Lca,H)=0$, where $\alpha(\Lca,H)$ is the morphism of equation (\ref{eq:e:alpha}) (recall that now we are identifying $X$ with $\hat X$). This map induces a morphism:
\begin{equation}\label{eq:e:alphax1}
\alpha(\Lca,H)\otimes 1\colon \Sc(\Lca)\otimes \Lca \to \Sc(\Lca_{|H})\otimes \Lca\,.
\end{equation}
Let us denote by $\delta(\Lca,H)$ the morphism induced by $\alpha(\Lca,H)\otimes 1$ between the spaces of global sections:
$$\delta(\Lca,H)\colon \Gamma(X,\Sc(\Lca)\otimes \Lca) \to \Gamma(X,\Sc(\Lca_{|H})\otimes \Lca)\,,$$
or, what is the same:
$$\delta(\Lca,H)\colon \Gamma(X\times X,\pi_{1}^{\ast}\Lca \otimes \Pc \otimes \pi_{2}^{\ast}\Lca) \to \Gamma(X\times X,(\pi_{1}^{\ast}\Lca \otimes \Pc \otimes \pi_{2}^{\ast}\Lca)_{|H\times X})$$
by the properties of the pushforward. Since $\Sc(\Lca)\otimes \Lca$ is generated by its global sections, one concludes.
\end{proof}

Let $\{\theta_{\sigma}(z), \sigma \in (\Z/2\Z)^g\}$ be a basis for the vector space $\Gamma \big(X,\Oc_{X}(2\Theta)\big)$. When $k=\C$, the basis $\{\theta_{\sigma}\}$ could be the classical basis of second-order theta functions of $(X,\Theta)$. If the characteristic of $k$ is $\neq 2$, we fix a theta structure and the basis will be the Mumford theta functions (see \cite{MumEq1}).

\begin{thm}\label{eq:t:globaleq}
Let $\Lca$ be the line bundle $\tau_{\xi}^{\ast}\Oc_{X}(2\Theta)$ and $\xi \in X$ a point such that $2\xi=c_{1}+\cdots +c_{n+2}$. Then:
$$U^{n+2}(\Lca,H)=\varphi_{\Lca}^{-1}\big(Z^{n+2}(\Lca,H)\big)$$
is scheme-theoretically defined by the following system of global equations:
$$\det \big(\theta_{\sigma_{\lambda_{i}}}(z-\xi+c_{j})\big)=0$$
for every $(\sigma_{\lambda_{1}},\dots ,\sigma_{\lambda_{n+2}})\in \big((\Z/2\Z)^g\big)^{n+2}$. Furthermore, these equations are the pullback under the isogeny $\varphi_{\Lca}$ of the global equations of $Z^{i}(\Lca,H)$.
\end{thm}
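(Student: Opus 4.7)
The identity $U^{n+2}(\Lca,H)=\varphi_{\Lca}^{-1}\big(Z^{n+2}(\Lca,H)\big)$ is Corollary~\ref{eq:c:3}, so the substance of the theorem is (a) to exhibit the stated determinantal equations as scheme-theoretic defining equations of $U^{n+2}(\Lca,H)$, and (b) to verify the pullback assertion. The plan is to compute $\beta(\Lca,H)$ explicitly in a basis of theta functions and to compare it with $\delta(\Lca,H)$ through the diagram of Lemma~\ref{eq:l:2}.

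For (a), I start from Definition~\ref{eq:d:U}. Since the points $c_{j}\in H$ are pairwise distinct, $(H,0)+\Delta\subset X\times X$ is the disjoint union of the graphs of the translations $\tau_{c_{j}}$, the map $\pi_{2}\colon(H,0)+\Delta\to X$ is \'etale of degree $n+2$, and there is a canonical splitting
$$\pi_{2\ast}(\pi_{1}^{\ast}\Lca_{|(H,0)+\Delta})\iso\bigoplus_{j=1}^{n+2}\tau_{c_{j}}^{\ast}\Lca\,,$$
so that $\beta(\Lca,H)$ sends $s\in\Gamma(X,\Lca)$ to the tuple of translates $(\tau_{c_{j}}^{\ast}s)_{j}$. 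Choosing the basis of $\Gamma(X,\Lca)=\Gamma(X,\tau_{\xi}^{\ast}\Oc_{X}(2\Theta))$ induced from $\{\theta_{\sigma}\}$, the matrix of $\beta$ has entries that are (after applying the normalization $2\xi=c_{1}+\cdots+c_{n+2}$ to write the shifts symmetrically) the functions $z\mapsto \theta_{\sigma}(z-\xi+c_{j})$; its maximal $(n+2)\times(n+2)$ minors are therefore exactly the determinants in the statement, and by definition they scheme-theoretically cut out $U^{n+2}(\Lca,H)$.

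For (b), I invoke Theorem~\ref{eq:t:gg}: $\Sc(\Lca)\otimes\Lca$ is globally generated, so Corollary~\ref{eq:c:delta} applies and $Z^{n+2}(\Lca,H)$ is cut out by the global equations $\Lambda^{n+2}\delta(\Lca,H)=0$. The commutative diagram of Lemma~\ref{eq:l:2} then identifies $\varphi_{\Lca}^{\ast}\bigl(\alpha(\Lca,H)\otimes 1\bigr)$ with $\beta(\Lca,H)$, and taking top exterior powers shows that the determinants of (a) are exactly the $\varphi_{\Lca}$-pullbacks of these global minors.

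I expect the main technical point to be the translation bookkeeping producing the symmetric "$z-\xi+c_{j}$" form: the hypothesis $2\xi=\sum c_{j}$ is precisely the algebraic identity that makes the $\xi$-shift baked into $\Lca$ combine with the translations $\tau_{c_{j}}$ to yield the stated entries, and which makes the determinantal line bundle $\bigotimes_{j}\tau_{c_{j}}^{\ast}\Lca$ on $X$ descend correctly along the isogeny $\varphi_{\Lca}=2_{X}$. Everything else is a direct combination of the earlier Lemmas and Corollaries.
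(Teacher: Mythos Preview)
Your argument is correct and follows the same route as the paper's: compute $\beta(\Lca,H)$ explicitly in the basis $\{\theta_{\sigma}(z-\xi)\}$ to get the determinantal equations for $U^{n+2}$, then combine Corollary~\ref{eq:c:delta} with Lemma~\ref{eq:l:2} for the pullback claim.

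Two small points of precision. First, Lemma~\ref{eq:l:2} identifies $\beta(\Lca,H)$ with $\varphi_{\Lca}^{\ast}\alpha(\Lca,H)\otimes 1_{\Lca}$, not with $\varphi_{\Lca}^{\ast}\bigl(\alpha(\Lca,H)\otimes 1_{\Lca}\bigr)$; the two differ by a further twist by $\Lca^{-1}\otimes 2_{X}^{\ast}\Lca$, and the paper closes exactly this gap by remarking that this line bundle (algebraically equivalent to $\Oc_{X}(6\Theta)$) is globally generated, so the twist does not alter the vanishing loci of the minors. Second, the hypothesis $2\xi=\sum_{j}c_{j}$ is not actually invoked in part~(a): the entries $\theta_{\sigma}(z-\xi+c_{j})$ are just $\tau_{c_{j}}^{\ast}\bigl(\theta_{\sigma}(z-\xi)\bigr)$, with no further identity needed, so your closing paragraph slightly over-reads its role.
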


\begin{proof}
By Definition \ref{eq:d:U}, the equations of $U^{n+2}(\Lca,H)$ are $\Lambda^{n+2}\beta(\Lca,H)$=0. In this context, $\beta(\Lca,H)$ is the map:
$$\beta(\Lca,H)\colon \Gamma(X,\Lca)\otimes_{k}\Oc_{X} \to \pi_{2 \ast}(\pi_{1}^{\ast}\Lca_{|(H,0)+\Delta})\iso \oplus_{i=1}^{n+2}\Gamma(X,\tau_{c_{i}}^{\ast}\Lca)\otimes_{k}\Oc_{X}\,.$$
Since $\Lca=\tau_{\xi}^{\ast}\Oc_{X}(2\Theta)$ for some $\xi \in X$, then 
$\{\theta_{\sigma}(z-\xi), \sigma \in (\Z/2\Z)^g\}$ is a basis for the vector space $\Gamma (X,\Lca)$ (recall that the translation map $\tau_{\xi}\colon X\to X$ is defined by $\tau_{\xi}(x)=x+\xi$). Therefore, taking global sections in the morphism above one has:
\begin{align*}
\beta(\Lca,H)\colon \Gamma(X,\Lca) &\to \oplus_{i=1}^{n+2}\Gamma(X,\tau_{c_{i}}^{\ast}\Lca)\\
\theta_{\sigma}(z-\xi) &\mapsto \big(\theta_{\sigma}(z-\xi +c_{1}),\dots ,\theta_{\sigma}(z-\xi+c_{n+2})\big)\,.
\end{align*}
 and thus the global equations of $U^{n+2}(\Lca,H)$ are:
$$\det \big(\theta_{\sigma_{\lambda_{i}}}(z-\xi+c_{j})\big)=0$$
for every $(\sigma_{\lambda_{1}},\dots ,\sigma_{\lambda_{n+2}})\in \big((\Z/2\Z)^g\big)^{n+2}$.

Let us see that these equations are the pullback under the isogeny $\varphi_{\Lca}$ of the global equations of $Z^{i}(\Lca,H)$. From Corollary \ref{eq:c:delta}, the global equations of $Z^{n+2}(\Lca,H)$ are $\Lambda^{i}\delta(\Lca,H)=0$ and now we have that:
{\small$$\Gamma(X\times X,\pi_{1}^{\ast}\Lca \otimes \Pc \otimes \pi_{2}^{\ast}\Lca) \xrightarrow{\delta(\Lca,H)} \Gamma \big(X\times X,(\pi_{1}^{\ast}\Lca \otimes \Pc \otimes \pi_{2}^{\ast}\Lca)_{H\times X}\big)\iso \oplus_{i=1}^{n+2}\Gamma(X,\tau_{\tilde c_{i}}^{\ast}\Lca)\,,$$
}where $\tilde c_{i}\in X$ is such that $2\tilde c_{i}=c_{i}$ (the last isomorphism makes use of the square lemma). Recall that $\delta(\Lca,H)$ is the map induced between the global sections of the map $\alpha(\Lca,H) \otimes 1$ of equation (\ref{eq:e:alphax1}).

Taking the pullback of $\alpha(\Lca,H) \otimes 1$ with respect to $\varphi_{\Lca}$ we obtain a morphism:
$$\Gamma(X,\Lca)\otimes (\Lca^{-1} \otimes 2_{X}^{\ast}\Lca) \to \oplus_{i=1}^{n+2}\big(\Gamma(X,\tau_{c_{i}}^{\ast}\Lca)\big)\otimes (\Lca^{-1} \otimes 2_{X}^{\ast}\Lca)\,.$$
%Corollary \ref{eq:c:3} states that $U^{n+2}(\Lca,H)=\varphi_{\Lca}^{-1}\big(Z^{n+2}(\Lca,H)\big)$ and bearing in mind Lemma \ref{eq:l:2} 

Since $\Lca^{-1}\otimes 2_{X}^{\ast}\Lca$ is generated by its global sections (it is algebraically equivalent to $\Oc_{X}(m\Theta)$ for $m>2$), using Lemma \ref{eq:l:2} and Corollary \ref{eq:c:3} one concludes.
\end{proof}

\section{Jacobians.}\label{eq:s:Jac}\qquad

The aim of this section is to give a geometric meaning to the above equations in the case in which the principally polarized abelian variety (p.p.a.v.) is the Jacobian of a smooth projective curve $C$ of genus $g\geq 1$. We shall see, using Kempf's results, that $Z^{i}(\Lca,H)$ is a translation of the $i$-th symmetric product of the curve C. Moreover, when $H$ consists of $3$ pairwise different closed points in the Jacobian, then the equations computed in Theorem \ref{eq:t:globaleq} are Fay's trisecant identity (\cite{Fay}). If the degree of $H$ is $n+2$ the equations are Gunning's relations (\cite{Gun}).

\subsection{The Jacobian case and the relation with Kempf's results.}\qquad

Let $J$ be the Jacobian of a smooth projective curve $C$ of genus $g\geq 1$. Accordingly, $J$ is a p.p.a.v. and the principal polarization is the so called theta divisor $\Theta$, which is determined up to translation. 

We shall work with line bundles $\Lca$ that are algebraically equivalent to $\Oc_{J}(m\Theta)$ for $m>0$. 
%Since these line bundles are ample and effective, the map $\varphi_{\Lca}\colon J \to \hat J$ is an isogeny. 
%Since $\varphi_{\Lca}$ is also flat, then it is faithfully flat. 

Consider the following exact sequence of sheaves on $J$:
$$0\to \Ic_{C}\otimes \Lca \to \Lca \to \Lca_{|C} \to 0$$
where $\Ic_{C}$ denotes the sheaf of ideals of the curve $C$.
\begin{prop}
For $m>1$, the twisted ideal sheaf $\Ic_{C}\otimes \Lca$ satisfies the $\IT_{0}$ condition.
\end{prop}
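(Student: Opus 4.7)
The plan is to reduce the $\IT_0$ condition to its standard cohomological form: $\Ic_C \otimes \Lca$ is $\IT_0$ if and only if $H^i\bigl(J, (\Ic_C \otimes \Lca) \otimes \Pc_\xi\bigr) = 0$ for every $i > 0$ and every $\xi \in \hat J$. Since $\Pc_\xi$ is topologically trivial, $\Lca' := \Lca \otimes \Pc_\xi$ is again algebraically equivalent to $\Oc_J(m\Theta)$, so it suffices to establish the vanishing $H^i(J, \Ic_C \otimes \Lca') = 0$ for every $i \geq 1$ and every such $\Lca'$.

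I would take cohomology in the defining sequence $0 \to \Ic_C \otimes \Lca' \to \Lca' \to \Lca'|_C \to 0$ and dispose of two pieces easily. First, $\Lca'$ is ample on $J$, hence $\IT_0$ by \cite[Prop.~3.19]{BBHR}, so $H^i(J, \Lca') = 0$ for $i \geq 1$. Second, using that the cohomology class of $C$ in $J$ is $[\Theta]^{g-1}/(g-1)!$, one computes $\deg(\Lca'|_C) = m(\Theta \cdot C) = mg$; for $m \geq 2$ and $g \geq 1$ this gives $mg \geq 2g > 2g-2$, so $H^1(C, \Lca'|_C) = 0$ by Riemann--Roch and Serre duality. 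The long exact sequence then forces $H^i(J, \Ic_C \otimes \Lca') = 0$ for all $i \geq 2$ automatically.

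The only remaining, and genuinely non-trivial, step is the surjectivity of the restriction map
$$\rho \colon H^0(J, \Lca') \longrightarrow H^0(C, \Lca'|_C)\,,$$
which is equivalent to the vanishing $H^1(J, \Ic_C \otimes \Lca') = 0$. This is precisely where the hypothesis $m > 1$ is crucial and the Jacobian structure of $J$ is used in an essential way --- for $m = 1$ both the degree condition on $\Lca'|_C$ (when $g \geq 2$) and the surjectivity of $\rho$ typically fail. I would invoke Kempf's classical projective normality result for the curve $C$ inside its Jacobian with respect to any line bundle algebraically equivalent to $m\Theta$ with $m \geq 2$; alternatively, the surjectivity can be extracted from the Pareschi--Popa M-regularity machinery recalled in Section~2 by verifying that the twists appearing in $\rho$ are M-regular and applying Corollary~\ref{eq:c:PP1}. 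Once $\rho$ is known to be surjective for every $\xi$, the cohomological characterization gives the $\IT_0$ condition.
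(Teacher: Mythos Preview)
Your reduction is sound and the argument is correct: the $\IT_0$ property is indeed equivalent to the fibrewise vanishing $H^i(J,\Ic_C\otimes\Lca')=0$ for all $i\geq 1$ and all $\Lca'$ algebraically equivalent to $\Oc_J(m\Theta)$, and the long exact sequence together with the degree bound $mg>2g-2$ disposes of everything except the surjectivity of $\rho$, which you correctly isolate as the substantial step and attribute to Kempf.

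The paper's route is different and more black-boxed. It does not unpack the cohomology at all: it simply quotes \cite[Thm.~4.1]{PP1}, which already asserts that $\Ic_C(2\Theta)$ is $\IT_0$, for the base case $m=2$; then it inducts on $m$ by invoking \cite[Prop.~2.9]{PP1}, which says (roughly) that tensoring an $\IT_0$ sheaf by an $\IT_0$ line bundle such as $\Oc_J(\Theta)$ preserves $\IT_0$. So the paper never explicitly confronts the surjectivity of the restriction map; that content is hidden inside the Pareschi--Popa citation. Your approach has the advantage of making clear exactly \emph{where} the Jacobian hypothesis and $m>1$ enter, at the cost of having to name a precise source for the surjectivity of $\rho$ for every translate of $m\Theta$.

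One small caveat: your alternative suggestion of extracting the surjectivity of $\rho$ from Corollary~\ref{eq:c:PP1} is not quite on target as stated. That corollary yields global generation of a sheaf, not surjectivity of a restriction map $H^0(J,\Lca')\to H^0(C,\Lca'|_C)$; the relevant Pareschi--Popa input here is rather the $\IT_0$ statement for $\Ic_C(2\Theta)$ itself (their Theorem~4.1), which is exactly what the paper cites.
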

\begin{proof}
The case $m=2$ is exactly \cite[Thm.4.1]{PP1}. Since $\Oc_{J}(\Theta)$ is also $\IT_{0}$ (its Fourier-Mukai transform is $\Oc_{J}(-\Theta)$), using \cite[Prop.2.9]{PP1} one has that $\Ic_{C}(3\Theta)$ is $\IT_{0}$. By applying this method recursively the result follows.
\end{proof}

\begin{corol}
The sheaf $\Lca_{|C}$ satisfies the $\IT_{0}$ condition.
\end{corol}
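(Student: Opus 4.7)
The plan is to apply the Fourier--Mukai transform to the short exact sequence
$$0 \to \Ic_{C}\otimes \Lca \to \Lca \to \Lca_{|C} \to 0$$
and derive the $\IT_{0}$ property for the quotient from the $\IT_{0}$ property of the other two terms. The preceding proposition supplies it for $\Ic_{C}\otimes \Lca$ (under the standing assumption $m>1$), and $\Lca$ itself is $\IT_{0}$ because it is algebraically equivalent to the ample bundle $\Oc_{J}(m\Theta)$, so that \cite[Prop.~3.19]{BBHR} (as already recalled in the preliminaries) applies.

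Rather than chasing a long exact sequence of higher Fourier--Mukai sheaves $R^{i}\Sc$ directly, I would use the pointwise characterization of $\IT_{0}$: a coherent sheaf $\F$ on $J$ is $\IT_{0}$ if and only if $H^{i}(J,\F\otimes \Pc_{\xi})=0$ for every closed point $\xi\in \hat J$ and every $i>0$, and in that case cohomology and base change automatically force $\Sc^{0}(\F)$ to be locally free. Fix an arbitrary $\xi\in \hat J$. Tensoring the exact sequence by the line bundle $\Pc_{\xi}$ preserves exactness, and the associated long exact sequence of cohomology contains, for every $i\geq 1$, the fragment
$$H^{i}(J,\Lca\otimes \Pc_{\xi}) \to H^{i}(J,\Lca_{|C}\otimes \Pc_{\xi}) \to H^{i+1}(J,(\Ic_{C}\otimes \Lca)\otimes \Pc_{\xi}).$$
Both outer terms vanish by the $\IT_{0}$ property of $\Lca$ and of $\Ic_{C}\otimes \Lca$, so the middle term vanishes as well. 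Since $\xi$ was arbitrary, the sheaf $\Lca_{|C}$ satisfies the $\IT_{0}$ condition.

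I do not foresee any genuine obstacle; the argument is essentially mechanical. The only mildly subtle point is ensuring that local freeness of the transform is part of the conclusion, which is precisely why I would phrase the $\IT_{0}$ property cohomologically on $\hat J$ instead of working only with the higher direct images $R^{i}\Sc$: the latter route immediately gives $R^{i}\Sc(\Lca_{|C})=0$ for $i\geq 1$, but then leaves a separate question of local freeness of the cokernel of $\Sc^{0}(\Ic_{C}\otimes \Lca)\hookrightarrow \Sc^{0}(\Lca)$, whereas the pointwise characterization delivers both properties simultaneously.
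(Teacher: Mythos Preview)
Your argument is correct and is exactly the reasoning the paper intends: the corollary is stated without proof, and the implicit argument is precisely the long exact sequence combined with the $\IT_{0}$ property of $\Lca$ and of $\Ic_{C}\otimes\Lca$ from the preceding proposition. Your remark on using the pointwise cohomological characterization to secure local freeness of $\Sc^{0}(\Lca_{|C})$ is a sensible refinement that the paper does not spell out.
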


\begin{corol}
The map $\rho \colon \Sc(\Lca) \to \Sc(\Lca_{|C})$ is surjective.
\end{corol}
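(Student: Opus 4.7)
The plan is to apply the Fourier--Mukai transform to the short exact sequence
$$0\to \Ic_{C}\otimes \Lca \to \Lca \to \Lca_{|C}\to 0$$
and read off the surjectivity of $\rho$ from the resulting long exact sequence of cohomology sheaves on the dual abelian variety.

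More precisely, first I would observe that all three terms in the above sequence are $\IT_{0}$: the sheaf $\Ic_{C}\otimes \Lca$ by the proposition just proved, $\Lca_{|C}$ by the preceding corollary, and $\Lca$ itself because it is algebraically equivalent to $\Oc_{J}(m\Theta)$ with $m>1$, hence in particular ample (we already invoked \cite[Prop. 3.19]{BBHR} for this in Section 2). Consequently, applying the integral functor $\Sc$ to the short exact sequence produces a distinguished triangle in $D^{b}(\hat J)$ whose long exact sequence of cohomology sheaves reads
$$\cdots \to \Sc^{i-1}(\Lca_{|C})\to \Sc^{i}(\Ic_{C}\otimes \Lca)\to \Sc^{i}(\Lca)\to \Sc^{i}(\Lca_{|C})\to \Sc^{i+1}(\Ic_{C}\otimes \Lca)\to\cdots$$
and by the $\IT_{0}$ property, every term with $i\neq 0$ vanishes.

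Therefore the long exact sequence collapses to the short exact sequence
$$0\to \Sc(\Ic_{C}\otimes \Lca)\to \Sc(\Lca)\xrightarrow{\ \rho\ } \Sc(\Lca_{|C})\to 0$$
of locally free sheaves on $\hat J$, which immediately gives surjectivity of $\rho$. The only step that requires any care is ensuring the three $\IT_{0}$ conditions are simultaneously available; once that is in hand, the argument is essentially formal and there is no real obstacle. As a bonus, this approach also identifies $\ker \rho$ with $\Sc(\Ic_{C}\otimes \Lca)$, which will presumably be useful when relating $Z^{i}(\Lca,H)$ to the geometry of the curve $C$ in the ensuing discussion.
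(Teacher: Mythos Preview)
Your proof is correct and is exactly the argument the paper has in mind: the corollary is stated without proof because it follows immediately from the long exact sequence obtained by applying $\Sc$ to $0\to \Ic_{C}\otimes \Lca \to \Lca \to \Lca_{|C}\to 0$ together with the $\IT_{0}$ property of $\Ic_{C}\otimes \Lca$ established in the preceding proposition.
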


Let $H$ be a closed finite subscheme of $C$. Thus, we have a restriction map:
$$\alpha'(\Lca,H) \colon \Sc(\Lca_{|C})\to \Sc(\Lca_{|H})$$
and the map:
$$\alpha(\Lca,H) \colon \Sc(\Lca)\to \Sc(\Lca_{|H})$$
factorizes by $\rho$:
$$\xymatrix{
\Sc(\Lca) \ar[rr]^{\alpha(\Lca,H)} \ar@{->>}[dr]^{\rho} && \Sc(\Lca_{|H})\\
&\Sc(\Lca_{|C})\ar[ur]_{\alpha'(\Lca,H)} &
}$$

Therefore, the subscheme $Z^{i}(\Lca,H)$ (see Definition \ref{eq:d:Z}) coincides with the closed subscheme defined by $\Lambda^{i}\alpha'(\Lca,H)=0$. This scheme has been intensively studied in \cite{Ke} in dual form and its relevance lies in the following fact. If:
$$2g-2\geq mg-\deg H \geq g-1\,,$$
we define $i$ by: 
$$i=2g-2-mg+\deg H\,.$$
Therefore, $Z^{\deg H}(\Lca,H)$ is a translation of $-W^{i}$, where $W^{i}$ is the image under the Abel-Jacobi map of the $i$-th symmetric product of the curve $C$. 

Since $\varphi_{\Lca}$ consists of multiplying by $m$, one has that $m^{-1}Z^{\deg H}(\Lca,H)=U^{\deg H}(\Lca,H)$. Thus, in this case Theorem \ref{eq:t:globaleq} generalizes Fay's trisecant identity for $m=2$ and $\deg H=3$.

\subsection{Recovering Fay's Trisecant Identity and Gunning's relations.}\qquad 

The global equations obtained in Theorem \ref{eq:t:globaleq} generalize Fay's trisecant identity (\cite{Fay}) and Gunning's relations (\cite{Gun}). To see this, let $p_{0}\in C$ be a closed point and $i\colon C \hookrightarrow J$ the embedding defined by $p_{0}$. We fix a theta characteristic $\eta$ (that is, $\eta^{\otimes 2}\iso \omega_{C}$, where $\omega_{C}$ denotes the canonical line bundle on $C$). These data allow us to determine a unique polarization $\Theta$ on $J$ with the condition $\Theta_{|C}=\eta +p_{0}$ (where we also use $\eta$ to refer to its associated divisor).

Let $\{p_{1},\dots ,p_{n+2}\}$ be pairwise different points of $C$ and $\{c_{1},\dots ,c_{n+2}\}$ their images in $J$, and let us choose a point $\xi \in J$ such that $2\xi =c_{1}+\cdots +c_{n+2}$. Let us define $\Lca:=\tau_{\xi}^{\ast}\Oc_{J}(2\Theta)$ and $H=\{c_{1},\dots ,c_{n+2}\}\subset J$. 

By applying Theorem \ref{eq:t:globaleq} to  $(J,\Lca,H)$ we recover Fay's trisecant identity for $n=1$ and Gunning's relations for arbitrary $n$.

The geometric interpretation of the above results must be given in terms of the geometry of the Kummer variety associated with the Jacobian. Let $\{\theta_{\sigma}(z), \sigma \in (\Z/2\Z)^g\}$ be a basis for the vector space $\Gamma \big(X,\Oc_{X}(2\Theta)\big)$; the linear series $|2\Theta|$ is identified with the projective space $\Ps^{N}$ (where $N=2^g-1$), and one has a morphism:
\begin{align*}
\phi_{J}\colon J & \to |2\Theta|\iso \Ps^{N}\\
x & \mapsto \Theta_{x}+\Theta_{-x}=(\dots ,\theta_{\sigma}(x),\dots )
\end{align*}
whose schematic image is the Kummer variety $K(J)$ (where $\Theta_{x}$ is the image of $\Theta$ under the translation morphism $\tau_{x}$).

Let $p_{1},p_{2},p_{3}$ be three pairwise different points of $C$ and $c_{1},c_{2},c_{3}$ its images in $J$ and let $\xi$ be a point $J$ such that $2\xi=c_{1}+c_{2}+c_{3}$. Recall that we are identifying $J$ with $\hat J$, so the isogeny $\varphi_{\Oc_{X}(2\Theta)}$ consists of multiplying by $2$.

\begin{thm}
A point $x\in \varphi^{\ast}_{\Oc_{X}(2\Theta)}(W^{1}-2\xi)$, that is $2x+c_{1}+c_{2}+c_{3}\in W^{1}$, if and only if the points $\phi_{J}(x+c_{1}),\phi_{J}(x+c_{2}),\phi_{J}(x+c_{3})$ are collinear in $\Ps^{N}$.
\end{thm}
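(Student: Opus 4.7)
The plan is to recognise the collinearity condition in $\Ps^{N}$ as the vanishing of the precise determinantal equations supplied by Theorem \ref{eq:t:globaleq}, and then to transport this across the isogeny $\varphi_{\Lca}=2_{X}$ via the Kempf-type identification recalled in Section \ref{eq:s:Jac}.

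First, by elementary linear algebra, the three points $\phi_{J}(x+c_{k})$, $k=1,2,3$, are collinear in $\Ps^{N}$ if and only if the $3\times 2^{g}$ matrix with rows $(\theta_{\sigma}(x+c_{k}))_{\sigma}$ has rank at most $2$, equivalently if and only if
$$\det\bigl(\theta_{\sigma_{\lambda_{i}}}(x+c_{j})\bigr)_{i,j=1}^{3}=0$$
for every triple $(\sigma_{\lambda_{1}},\sigma_{\lambda_{2}},\sigma_{\lambda_{3}})\in ((\Z/2\Z)^{g})^{3}$.

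Next, I would apply Theorem \ref{eq:t:globaleq} with $n=1$, $H=\{c_{1},c_{2},c_{3}\}$ and $\Lca=\tau_{\xi}^{\ast}\Oc_{J}(2\Theta)$. That theorem states that $U^{3}(\Lca,H)=\varphi_{\Lca}^{-1}(Z^{3}(\Lca,H))$ is cut out scheme-theoretically by the system $\det(\theta_{\sigma_{\lambda_{i}}}(z-\xi+c_{j}))=0$. Setting $z:=x+\xi$ turns these into the collinearity equations above, so the $\phi_{J}(x+c_{k})$ are collinear if and only if $x+\xi\in U^{3}(\Lca,H)$. Since $\Lca$ is algebraically equivalent to $\Oc_{J}(2\Theta)$ and $\varphi_{\Lca}$ depends only on the algebraic equivalence class, we have $\varphi_{\Lca}=\varphi_{\Oc_{J}(2\Theta)}=2_{X}$ under the identification $J\iso \hat J$; hence collinearity is equivalent to $2(x+\xi)=2x+c_{1}+c_{2}+c_{3}\in Z^{3}(\Lca,H)$.

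Finally, the Kempf identification of Section \ref{eq:s:Jac}, applied with $m=2$ and $\deg H=3$ (so the relevant index is $i=2g-2-mg+\deg H=1$), identifies $Z^{3}(\Lca,H)$ with $W^{1}$, and chaining the equivalences gives $2x+c_{1}+c_{2}+c_{3}\in W^{1}$, i.e.\ $x\in \varphi^{\ast}_{\Oc_{X}(2\Theta)}(W^{1}-2\xi)$. The main obstacle is this last identification: Kempf's theorem \emph{a priori} only realises $Z^{\deg H}(\Lca,H)$ as \emph{some} translate of $-W^{i}$, so to turn it into the on-the-nose equality $Z^{3}(\Lca,H)=W^{1}$ one must carefully exploit the symmetry of $\Theta$ together with the normalisation $\Theta_{|C}=\eta+p_{0}$ and the particular choice of $\xi$ with $2\xi=c_{1}+c_{2}+c_{3}$. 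Keeping track of these translates is where the real bookkeeping lies.
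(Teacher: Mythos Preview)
Your proposal is correct and follows the paper's own (implicit) reasoning: the paper gives no separate proof of this theorem, treating it as the set-theoretic reformulation of Theorem~\ref{eq:t:globaleq} together with the Kempf identification from Section~\ref{eq:s:Jac}. Your reduction of collinearity to the vanishing of the $3\times 3$ minors, the substitution $z=x+\xi$, and the passage through $\varphi_{\Lca}=2_{X}$ are exactly the intended steps; you are in fact more careful than the paper in flagging that Kempf only gives $Z^{3}(\Lca,H)$ as \emph{some} translate of $-W^{1}$ and that pinning down the exact translate requires the normalisation $\Theta_{|C}=\eta+p_{0}$ and the choice of $\xi$.
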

 
 This theorem and its schematic equivalent Theorem \ref{eq:t:globaleq} means that, up to the isogeny $\varphi_{\Oc_{X}(2\Theta)}$, the ideal of the curve $C$ as a subvariety of the Jacobian is generated by the trisecant identities. 

\subsection{Relation with the Riemann-Schottky problem.}\quad
%The Riemann-Schottky problem on the characterization of abelian variaties which are the Jacobians of curves is more than a century old. The first effective solution to the problem was give by Shiota (\cite{Sh}) in $1986$ (who has solved the famous Novikov conjecture formulated in terms of the KP equation). ........Fay's trisecant identity implies that if  the abelian variety is a Jacobian of a curve, then any three distinct points of the curve determine a one-parameter family of trisecants on the associated Kummer variety.

A geometrical characterization of Jacobians was proposed by Gunning \cite{Gun1,Gun2} based on Fay's trisecant identity \cite{Fay}.  Gunning's result claims that the existence of a family of trisecants is a necessary and sufficient condition for a p.p.a.v. to be the Jacobian of an algebraic curve. The first step in formulating Gunning's criterion in terms of equations was taken by Welters \cite{Wel1,Wel2} and its remarkable conjecture, which states that to characterize Jacobians the existence of \emph{only one} trisecant, is sufficient. This conjecture has been solved by Krichever \cite{K2,K3} in all three different configurations of the intersections points of the trisecant in the Kummer variety. Here we offer here a reformulation of Krichever's result (\cite{K3}).

Let $(X,\Theta)$ be a p.p.a.v. and let $H=\{c_{1},c_{2},c_{3}\}$ be a set of $3$ pairwise different closed points of $X$. Let $\Lca$ be the line bundle $\Oc_{X}(2\Theta)$.

\begin{defin}
Let us define $W^{1}(X,H):=-Z^{3}(\Lca,H)$.
\end{defin}

Recall that $Z^{3}(\Lca,H)$ is defined by the vanishing of the map between exterior powers:
$$\Lambda^{3}\alpha(\Lca,H)\colon \Lambda^{3}\Sc(\Lca) \to \Lambda^{3}\Sc(\Lca_{|H})\,.$$

%We have seen that $\Lca$ is $\IT_{0}$, so that:
%$$r=\rk \Sc(\Lca)=\chi(X,\Lca)=\Gamma(X,\Lca)=\Gamma \big(X,\Oc_{X}(2\Theta)\big)=2^g$$
%and locally, for an open subset $V$ of $X$ containing $H$, one has: $\Sc(\Lca)_{|V}\iso \Oc_{V}^{\oplus r}$. Moreover, since $\Lca_{|H}$ is $%\IT_{0}$ and $H$ is a set of $3$ pairwise different closed points of $X$, the rank of $\Sc(\Lca_{|H})$ is $3$. This implies that the map:
%$$\alpha(\Lca,H)\colon \Sc(\Lca)_{|V} \to \Sc(\Lca_{|H})\,,$$
%which consists of in evaluation at the points $c_{i}$ of $H$, is defined in a certain local basis by a $3\times r$ matrix. Therefore, the local %equations of $Z^3(\Lca,H)$ are the zeroes of all the minors of order $3$ of this matrix. 
From the theory developed in the previous sections, one can reformulate Krichever's result (\cite{K3}) in the following fashion.

\begin{thm}
The p.p.a.v. $(X,\Theta)$ is the Jacobian of a curve if and only if there exists at least one point in $W^1(X,H)$ that is not a $2$-torsion point. 
\end{thm}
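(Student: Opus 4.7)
The plan is to handle the two implications separately, with the hard direction reducing to an invocation of Krichever's theorem.

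For the forward implication, assume $(X,\Theta)=J$ is the Jacobian of a smooth projective curve $C$ of genus $g\geq 1$. Take three pairwise different points $p_{1},p_{2},p_{3}\in C$ and let $c_{1},c_{2},c_{3}\in X$ be their images under the Abel morphism, so that $H=\{c_{1},c_{2},c_{3}\}$. Applying the degree computation of Section \ref{eq:s:Jac} with $m=2$ and $\deg H=3$ (which gives $i=2g-2-2g+3=1$ when $g\geq 2$; the case $g=1$ is immediate as $X\iso C$), the scheme $Z^{3}(\Lca,H)$ is a translate of $-W^{1}$. Hence $W^{1}(X,H)=-Z^{3}(\Lca,H)$ is a translate of the one-dimensional subvariety $W^{1}\iso C$ of $X$. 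Since $X$ has only finitely many $2$-torsion points, $W^{1}(X,H)$ contains infinitely many non-$2$-torsion points.

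For the reverse implication, suppose $x\in W^{1}(X,H)$ with $2x\neq 0$, so that $-x\in Z^{3}(\Lca,H)$. Picking any $y\in X$ with $2y=-x$, one has $y\in U^{3}(\Lca,H)$ by Corollary \ref{eq:c:3}. According to the geometric restatement preceding the current theorem (itself a direct consequence of Theorem \ref{eq:t:globaleq}), this is equivalent to the collinearity on $K(X)\subset\Ps^{N}$ of the three Kummer images $\phi_{X}(y+c_{1})$, $\phi_{X}(y+c_{2})$, $\phi_{X}(y+c_{3})$. Thus $K(X)$ admits a trisecant line, and the hypothesis $2x\neq 0$ ensures that this trisecant is non-degenerate in the precise sense required by Krichever's theorem. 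An application of \cite{K3} then yields that $(X,\Theta)$ is the Jacobian of a smooth projective curve.

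The main obstacle is of course the invocation of Krichever's theorem, which we do not reprove. A secondary technical point within our framework is checking that the non-$2$-torsion condition on $x$ corresponds exactly to the non-triviality hypothesis in Krichever's formulation: a $2$-torsion $x$ could push the trisecant into a degenerate stratum (a limiting flex or inflectional configuration of the Kummer variety), for which the bare existence of a trisecant line is not sufficient in itself to characterize Jacobians and must be handled by Krichever's finer analysis.
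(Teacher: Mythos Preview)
The paper gives no proof of this theorem at all; it is stated baldly as a reformulation of Krichever's result \cite{K3}, with the sentence ``From the theory developed in the previous sections, one can reformulate Krichever's result (\cite{K3}) in the following fashion'' serving in lieu of an argument. Your proposal therefore does strictly more than the paper: you unpack the reformulation, deriving the forward direction from the identification of $Z^{3}(\Lca,H)$ with a translate of $-W^{1}$ in Section~\ref{eq:s:Jac} and reducing the converse to Krichever via the collinearity interpretation of $U^{3}(\Lca,H)$ implicit in Theorem~\ref{eq:t:globaleq}. This is exactly the intended route.

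Two remarks. First, your forward implication \emph{chooses} $H$ to consist of Abel images of points of $C$; this is the correct reading (existential in $H$), since that is what makes the statement an honest reformulation of ``one trisecant suffices'', but the paper's phrasing fixes $H$ beforehand and leaves this quantification implicit. Second, the collinearity interpretation you invoke for the converse is stated in the paper only for Jacobians (the unnumbered theorem at the end of \S4.2), but, as you note, it holds for any p.p.a.v.\ directly from the explicit determinantal equations of Theorem~\ref{eq:t:globaleq}. Your honest flagging of the match between the non-$2$-torsion hypothesis and Krichever's non-degeneracy condition is appropriate: the paper does not justify this correspondence either.
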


%All the results are obtained from \cite{KM}.
%\cite{KM,Ke,BBHR,LMS,PP1,MumAb,Muk1,Pol,Gun}

\bibliographystyle{siam}
\bibliography{biblio}
\end{document}